\newtheorem{theorem}{Theorem}[section]
\newtheorem{remark}[theorem]{Remark}
\newtheorem{lemma}[theorem]{Lemma}
\newtheorem{proposition}[theorem]{Proposition}
\numberwithin{equation}{section} \theoremstyle{definition}
\def\span{\operatorname{span}}
\newcommand{\ZZ}{{\mathbb Z}}
\newcommand{\C}{\ensuremath{\mathbb C}\xspace}
\newcommand{\Q}{\ensuremath{\mathbb{Q}}\xspace}
\renewcommand{\a}{\ensuremath{\alpha}}
\newcommand{\h}{\ensuremath{\mathfrak{h}}}
\newcommand{\Z}{\ensuremath{\mathbb{Z}}\xspace}
\newcommand{\W}{\ensuremath{\mathcal{W}}\xspace}
\renewcommand{\phi}{\varphi}
\renewcommand{\leq}{\leqslant}
\renewcommand{\geq}{\geqslant}
\def\mu{\mathfrak{u}}
\def\sl{\mathfrak{sl}}
\def\gl{\mathfrak{gl}}
\def\span{\text{span}}
\def\bE{\bar E}
\begin{document}
\title[Irreducible Witt modules]{New families of irreducible weight  modules over $\sl_{3}$}
\author[Futorny,  Liu,   Lu,   Zhao]{Vyacheslav Futorny, Genqiang Liu, Rencai Lu, Kaiming Zhao}
\date{}\maketitle

\date{}\maketitle
\begin{abstract} Let $n>1$ be an integer, $\alpha\in{\mathbb C}^n$, $b\in{\mathbb C}$,  and  $V$ a $\mathfrak{gl}_n$-module. We define a class of weight modules $F^\alpha_{b}(V)$ over $\sl_{n+1}$ using the restriction of  modules of tensor fields over the Lie algebra of vector fields on $n$-dimensional torus.  In this paper we consider the case $n=2$ and  prove the irreducibility of  such 5-parameter $\mathfrak{sl}_{3}$-modules $F^\alpha_{b}(V)$  generically. All such modules have
 infinite dimensional weight spaces and lie outside of the category of Gelfand-Tsetlin modules. Hence, this construction yields new families of irreducible $\mathfrak{sl}_{3}$-modules.
 \end{abstract}

\vskip 10pt \noindent {\em Keywords:}   Witt algebra, $\gl_{n}$, $\sl_{n+1}$, weight module, irreducible  module

\vskip 5pt
\noindent
{\em 2010  Math. Subj. Class.:}
17B10, 17B20, 17B65, 17B66, 17B68

\vskip 10pt

\section{Introduction}
Representation theory of  infinite-dimensional Lie algebras has
been artacting extensive attentions of many mathematicians and
physicists. These Lie algebras include in particular the Witt algebra
$\mathcal{W}_n$ which is the derivation algebra of the Laurent
polynomial algebra $A_n=\C[x_1^{\pm1},x_2^{\pm1},\cdots,
x_n^{\pm1}]$. The algebra $\mathcal{W}_n$ is a natural higher rank
generalization of the Virasoro algebra, which has many applications
to different branches of mathematics and physics. It can also be defined as the Lie algebra of polynomial vector fields on
$n$-dimensional torus.
Weight representations of Witt algebras was recently
studied by many authors; see \cite{B, E1, E2, BMZ, GLZ,L3, L4, L5,
MZ2, Z, BF1, BF2, BF3}. In particular, in \cite{BF3}  a classification of weight irreducible $\mathcal{W}_n$-modules with finite weight multiplicities was given.

 In 1986,  Shen \cite{Sh} defined a class of modules $F^\alpha_b(V)$
over the Witt algebra $\mathcal{W}_n$ for  $\a\in\C^n$, $b\in\C$,
and an irreducible module $ V$ over the special linear Lie algebra
$\sl_n$.  This construction was also considered by  Larsson in  \cite{L3}.  
They are known as  \emph{modules of tensor fields}
when $V$ is finite dimensional and they have a geometric origin. These modules play essential part in the classification of 
weight irreducible $\mathcal{W}_n$-modules with finite weight multiplicities \cite{BF3}.
In 1996, Eswara Rao determined the necessary and
sufficient conditions for  modules of tensor fields to be irreducible  \cite{E1} (see also  \cite{GZ}). When $V$ is infinite dimensional,
the $\mathcal{W}_n$-module $F^\alpha_b(V)$ is always irreducible, see \cite{LZ}.
 With this paper we begin a systematic study
of the functors $$V\rightarrow F^\alpha_b(V),  \,\,   \ \  V\rightarrow F^\alpha_b(V)|_{\sl_{n+1}}$$ from the category of weight $\sl_n$-modules to the
the category of weight $\mathcal{W}_n$-modules and $\sl_{n+1}$-modules respectively.  Our goal is to construct irreducible
weight modules over $\sl_{n+1}$
with infinite dimensional weight spaces.

At the moment classification of all irreducible weight $\sl_{n+1}$-modules is only known for $n=1$. The largest subcategory of weight modules with infinite weight multiplicities
where
classification problem  can be handled  is the category of Gelfand-Tsetlin modules \cite{DFO}. A classification of irreducible modules in this category is complete for $n=2$ \cite{FGR1} and
is
known up to some finiteness  \cite{Ov}, \cite{FO} in general. Recently, new families of irreducible weight modules for $\sl_{n+1}$ were constructed by Futorny, Grantcharov and Ramirez
\cite{FGR2}, \cite{FGR3}, \cite{FGR4}.

It is not easy to construct examples of irreducible weight modules beyond the category of Gelfand-Tsetlin modules.  In the present paper  we treat the case $n=2$ and construct
new  families of 5-parameter
irreducible modules over the Witt algebra $\mathcal{W}_2$  with
infinite dimensional weight spaces and over the Lie algebra $\sl_{3}$. For the latter algebra, generic weight modules lie outside  of the category of Gelfand-Tsetlin modules.  We conjecture that
the same holds for an arbitrary $n\geq 2$.

\

\noindent{\bf Acknowledgements}. V.F. is supported in part by
the CNPq grant (301320/2013-6) and by the Fapesp grant (2014/09310-5); G.L. is partially supported by NSF of China (Grant
11301143) and  the school fund of Henan University (2012YBZR031, yqpy20140044); 
 K.Z. is supported in part by the Fapesp grant (2015/08615-0), by  NSF of China (Grant
11271109) and NSERC. He gratefully acknowledges the hospitality and excellent working conditions at
the S\~ao Paulo University where part of this work was done.

\section{$\sl_{n+1}$-modules from $\gl_n$-modules}
We denote by $\mathbb{Z}$, $\mathbb{Z}_+$, $\mathbb{N}$ and
$\mathbb{C}$ the sets of  all integers, nonnegative integers,
positive integers and complex numbers, respectively.

For a positive integer $n>1$, $\ZZ^{n}$ denotes the direct sum of $n$ copies of $\ZZ$.
 For any $a=(a_1,\cdots, a_n) \in \Z_+^n$ and $m=(m_1,\cdots,m_n)
 \in\C^n$, we denote $m^{a}=m_1^{a_1}m_2^{a_2}\cdots m_n^{a_n}$.  Let $\gl_n$ be the
Lie algebra of all $n \times n$ complex matrices, $\sl_n$ the
subalgebra of $\gl_n$ consisting of all traceless matrices. For $1
\leq i, j \leq n$ we denote by $E_{ij}$  the matrix units of    $\gl_n$   and by $\bE_{i,j}$  the   matrix units  of $\gl_{n+1}$.
 For a matrix $X$ we will denote by $X'$ its transpose.

\subsection{ Witt algebras $\mathcal{W}_n$}
We  denote by  $ \mathcal{W}_n$ the  Lie algebra of derivations of the
Laurent polynomial algebra $A_n=\C[t_1^{\pm1},t_2^{\pm1}, . . .
, t_n^{\pm1}]$, see \cite{L1}-\cite{L5}. For $i\in\{1,2,\dots,n\}$, denote $\partial
_i=t_i\frac{\partial}{\partial t_i}$; and for any
$a=(a_1,a_2,\cdots, a_n)\in\mathbb{Z}^n$ set  $t^a=t_1^{a_1}t_2^{a_2}\cdots t_n^{a_n}$.

Denote the standard basis by $\{e_1,e_2,...,e_n\}$  of the vector space $\mathbb{C}^n$.
For any $u\in \mathbb{C}^n$ we view $u'$ as a column vector.
Let
$(\,\cdot\,|\, \cdot\, )$ be the standard symmetric bilinear form
such that $(u | v)=uv'\in\mathbb{C}$.  For $u \in \mathbb {C}^n$
and $r\in \mathbb{Z}^n$, we set
$D(u,r)=t^r\sum_{i=1}^nu_i\partial_i$. Then we have
$$[D(u,r),D(v,s)]=D(w,r+s),\,u,v\in \mathbb {C}^n, r,s\in \mathbb {Z}^n,$$
where $w=(u | s)v-(v | r)u$. Note that for any $u,v,z,y\in
\mathbb{C}^n$, both $u'v $ and $x'y $ are $n\times n$ matrices, and
 \begin{equation*}(u'v)(z'y )=(v|z)u'y .\end{equation*}
A subalgebra $\h=\span\{\partial_1, \partial_2, ... , \partial_n\}$ is the Cartan
 subalgebra of $\mathcal{W}_n$. 

 The extended Witt algebra
$\widetilde{\mathcal{W}}_n$ is the semidirect sum of $\W_n$ and the
abelian Lie algebra $A_n$ with intertwining brackets
$$[D(u,r), t^s]=(u|s)t^{r+s}, \,\,\forall\,\,\,u\in \mathbb {C}^n, r,s\in \mathbb {Z}^n.$$

For  any  $\alpha\in \mathbb{C}^n, b\in\C$ and a $\gl_n$-module $V$ on
which the  identity matrix acts as the scalar $b$, set
$F^\alpha_b(V)=V\otimes A_n$. For simplicity we write $v(n) = v
\otimes x^n$ for any $v\in V, n\in\Z^n$.
 Then $F^\alpha_b(V)$  becomes a
${\mathcal{W}}_n$-module with respect to  the following
action
 \begin{equation}D(u,r)v(n)=\Big((u \mid n+\alpha)v+
 (r'u)v\Big)(n+r),\end{equation}
where $u\in\mathbb{C}^n$, $v\in V$,  $ n, r\in\mathbb{Z}^n$ see \cite{L1} and \cite{Sh}. It is
easy to see that the module $F^\alpha_b(V)$ obtained from any $\gl_n$-module $V$ is
always a weight module over $\mathcal{W}_n$. The following result is well-known \cite{E1, Ru, GZ, Z}.


\begin{theorem}\label{t} Let $\alpha\in \mathbb{C}^n, b\in\C$, and let $V$ be
 an irreducible finite dimensional module over $\gl_n$ on  which the identity matrix acts as the scalar
$b$. Then $F^\alpha_b(V)$ is irreducible
${\mathcal{W}}_n$-module unless it appears in the de Rham
complex of differential forms
$$t^{\alpha} \Omega^0 \rightarrow t^{\alpha} \Omega^1 \rightarrow \; \ldots \; \rightarrow t^{\alpha} \Omega^n.$$
The middle terms in this complex are reducible $W_n$-modules,
while the modules
$t^{\alpha} \Omega^0$ and $t^{\alpha}
\Omega^n$ are reducible whenever $\alpha\in\Z^n$. Here $t^{\alpha} \Omega^i\simeq F^\alpha_b(\Lambda^i(\mathbb{C}^n)).$
\end{theorem}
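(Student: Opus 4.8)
The plan is to analyze submodules one weight space at a time, using two families of operators, and to isolate exactly when the analysis degenerates.

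First, every $\W_n$-submodule $M\subseteq F^\alpha_b(V)$ is graded: the Cartan $\h\subseteq\W_n$ acts on $V\otimes t^n$ through the character $\partial_i\mapsto n_i+\alpha_i$, and these characters are pairwise distinct for distinct $n\in\Z^n$, so $M=\bigoplus_{n}M_n$ with $M_n:=M\cap(V\otimes t^n)$. Write $m=n+\alpha$. The shift operator sends $v\otimes t^n\mapsto\big((u\mid m)\,\mathrm{id}_V+r'u\big)v\otimes t^{n+r}$, while the loop operator $D(v,-r)D(u,r)$ preserves $V\otimes t^n$; computing the latter as an element of $\operatorname{End}(V)$ and using the identity $(u'v)(z'y)=(v\mid z)u'y$, one finds it equals $(u\mid m)(v\mid m+r)\,\mathrm{id}_V+\rho\big(r'[(v\mid m)u-(u\mid m)v]\big)$, where $\rho\colon\gl_n\to\operatorname{End}(V)$ is the representation. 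Since the vectors $(v\mid m)u-(u\mid m)v$ span $m^{\perp}$ for $m\neq0$, the operators preserving $V\otimes t^n$ generate an associative subalgebra of $\operatorname{End}(V)$ containing $\C\,\mathrm{id}_V+\rho(\mathfrak{p}_m)$, where $\mathfrak{p}_m:=\{X\in\gl_n: Xm'=0\}$ (so that $\C\,\mathrm{id}+\mathfrak{p}_m$ is the maximal parabolic of $\gl_n$ stabilizing the line $\C m'$). Consequently every nonzero $M_n$ is a nonzero module over $\C\,\mathrm{id}_V+\rho(\mathfrak{p}_m)$; when $m=0$, which forces $\alpha\in\Z^n$, one argues similarly using loops of length $\geq4$.

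Next I would dispose of the exceptional modules directly. The assignment $e_{j_1}\wedge\cdots\wedge e_{j_i}\otimes t^n\mapsto t^{\alpha+n}\,\tfrac{dt_{j_1}}{t_{j_1}}\wedge\cdots\wedge\tfrac{dt_{j_i}}{t_{j_i}}$ is a $\W_n$-isomorphism $F^\alpha_b(\Lambda^i(\C^n))\xrightarrow{\sim}t^\alpha\Omega^i$, checked via Cartan's formula $\mathcal L_X=d\iota_X+\iota_X d$ with $X=D(u,r)$. As the Lie derivative commutes with $d$, the twisted de Rham differentials $d_\alpha\colon t^\alpha\Omega^i\to t^\alpha\Omega^{i+1}$ are $\W_n$-homomorphisms, acting in weight $n$ by $\eta\mapsto(\alpha+n)\wedge\eta$. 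For $0<i<n$ the subspace $(\alpha+n)\wedge\Lambda^{i-1}(\C^n)$ is nonzero and proper in $\Lambda^i(\C^n)$ for every $\alpha$, so $\mathrm{im}\,d_\alpha$ (equivalently $\ker d_\alpha$) is a proper nonzero $\W_n$-submodule of $t^\alpha\Omega^i$. For $i=0$ (resp.\ $i=n$) this subspace fails to be all of $\Lambda^i$ only when $\alpha+n=0$, i.e.\ $\alpha\in\Z^n$, yielding the one-dimensional constant submodule $\ker d_\alpha\subset t^\alpha\Omega^0$ (resp.\ the codimension-one submodule $\mathrm{im}\,d_\alpha\subset t^\alpha\Omega^n$); for $\alpha\notin\Z^n$ these modules are irreducible by the argument below.

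The remaining and hardest point is irreducibility of $F^\alpha_b(V)$ when $V$ is none of the $\Lambda^i(\C^n)$. Given a nonzero submodule $M$, choose $n$ with $M_n\neq0$; since $D(u,0)$ acts on $M_n$ by $(u\mid m)\,\mathrm{id}$, either $m\neq0$ or $\dim V=1$ (i.e.\ $V=\Lambda^0$ or $\Lambda^n$, already treated). If $m\neq0$ the shift operators give $M_{n+r}\neq0$ for all $r$, so $M$ meets every weight space, and one wants to force some $M_{n'}$, hence $M$, to be everything. The obstacle is that a proper nonzero module over $\C\,\mathrm{id}_V+\rho(\mathfrak{p}_m)$ can sit inside $V$ for other $V$ as well (e.g.\ $\mathfrak{p}_m\cap\sl_n$ inside the adjoint representation). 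The point is that the family $\{M_{n'}\}$ must be stable under all the parabolics $\C\,\mathrm{id}_V+\rho(\mathfrak{p}_{n'+\alpha})$ at once — and these span $\rho(\gl_n)$ as $n'$ varies — while also being intertwined by the shift maps $D(u,r)$; one shows that a nonzero family of proper subspaces meeting both requirements exists precisely when $V$ is a fundamental representation $\Lambda^i(\C^n)$, in which case it is $\{(\alpha+n')\wedge\Lambda^{i-1}(\C^n)\otimes t^{n'}\}$. I expect this compatibility-and-classification step — in essence the statement that the natural $\gl_n$-maps $\C^n\otimes V\to V$ and $V\to\C^n\otimes V$ admit nonzero equivariant splittings only for $V=\Lambda^i(\C^n)$ — to be the main technical obstacle, with the cases $i=0,n$ singled out because $\mathfrak{p}_m=\gl_n$ exactly when $m=0$, i.e.\ $\alpha\in\Z^n$.
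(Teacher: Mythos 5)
First, a point of reference: the paper does not prove Theorem \ref{t} at all --- it is quoted as well known, with the proof deferred to \cite{E1, Ru, GZ, Z} --- so your attempt can only be judged on its own merits, and on those merits it has a genuine gap. The gap is your third paragraph: the statement that a nonzero family of proper subspaces $M_{n'}\subset V\otimes t^{n'}$, stable under all the subalgebras $\C\,\mathrm{id}_V+\rho(\mathfrak{p}_{n'+\alpha})$ and intertwined by the shift operators $D(u,r)$, exists \emph{precisely} when $V=\Lambda^i(\C^n)$ is exactly the content of the theorem, and you do not prove it --- you write ``one shows'' and then acknowledge it as ``the main technical obstacle.'' This is where all the real work lies (it is the heart of Rudakov's and Eswara Rao's arguments, typically carried out by passing to a highest weight vector of $V$, analyzing the $\mathfrak{p}_m$-module generated by it, and forcing the highest weight to be a fundamental weight $\omega_i$ with the matching scalar $b=i$); your proposed reformulation via equivariant splittings of $\C^n\otimes V\to V$ and $V\to\C^n\otimes V$ is not established either, nor is it shown to be equivalent to the needed compatibility condition. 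The first two paragraphs (graded submodules, the identification $F^\alpha_b(\Lambda^i(\C^n))\simeq t^\alpha\Omega^i$, reducibility of the middle terms via $d_\alpha$, and the $\alpha\in\Z^n$ criterion for $i=0,n$) are correct, but they only settle the easy ``only if'' half.

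There is also a computational error in the one identity you do display. On $V\otimes t^n$ the operator $D(v,-r)D(u,r)$ is the composition $\bigl((v\,|\,m+r)\mathrm{id}_V-\rho(r'v)\bigr)\bigl((u\,|\,m)\mathrm{id}_V+\rho(r'u)\bigr)$, which equals your expression only after discarding the terms $(v\,|\,r)\rho(r'u)-\rho(r'v)\rho(r'u)$; since $\rho(r'v)\rho(r'u)\neq\rho\bigl((r'v)(r'u)\bigr)$, these do not cancel in general. The conclusion you want (that the algebra of operators preserving $V\otimes t^n$ contains $\C\,\mathrm{id}_V+\rho(r'w)$ for all $w\perp m$, hence all of $\C\,\mathrm{id}_V+\rho(\mathfrak{p}_m)$) is still true, but it requires the standard repair: replace $r$ by $Nr$ for infinitely many integers $N$, observe that the resulting operators depend polynomially on $N$, and extract the coefficient of $N^1$ by a Vandermonde argument. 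With that fix paragraph one is sound; without the classification step, however, the proof of irreducibility for $V\not\simeq\Lambda^i(\C^n)$ is missing, so the proposal does not yet prove the theorem.
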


\subsection{ Defining $\sl_{n+1}$-modules}
Recall a standard embedding of $\sl_{n+1}$ into ${\mathcal{W}}_n$. Set  $$\begin{aligned}&\bE_{ij}=t_it_j^{-1}\partial_j, 1\leq i,j\leq n;\\
&\bE_{i,n+1}=-t_i\sum_{j=1}^n\partial_j,  \  \  \  \bE_{n+1,i}=t_i^{-1}\partial_i,
1\leq i\leq n;\\ &\bE_{n+1,n+1}=-\sum_{j=1}^n\partial_j.\end{aligned}$$
Then it is well-known that the linear span over $\C$ of the set
$$\{\bE_{ij}:1\le i\ne j\le  n+1\}\cup \{\bE_{ii}-\bE_{i+1,i+1}: 1\le i\le
n\}$$
 is isomorphic to the Lie algebra $\mathfrak{sl}_{n+1}(\C)$, see for example \cite{M}. So
 $\mathfrak{sl}_{n+1}(\C)$ can be regarded as a subalgebra of
$ {\mathcal{W}}_n$ and each
$ {\mathcal{W}}_n$-module can be seen as a
$\mathfrak{sl}_{n+1}(\C)$-module by restriction. We fix the following {\emph {Cartan subalgebra}} of
$\mathfrak{sl}_{n+1}(\C)$:
$${\mathfrak{h}}=\bigoplus_{i=1}^{n-1}\C(\partial_i-\partial_{i+1})+\C(\sum_{j=1}^n\partial_j+\partial_n)
=\bigoplus_{i=1}^n\C\partial_i.$$  Also, set
$$\mathfrak{n}_{+} =\bigoplus_{1\le i<j\le n+1}\C \bE_{ij} , \ \ \ \mathfrak{n}_{-}=\bigoplus_{1\le j<i\le n+1}\C \bE_{ij}$$
Then $\mathfrak{sl}_{n+1}(\C)$ has a triangular decomposition
$\mathfrak{sl}_{n+1}(\C)=\mathfrak{n}_{-}\oplus
{\mathfrak{h}}\oplus\mathfrak{n}_{+}.$
Restricting a  $ {\mathcal{W}}_n$-module
$F^\alpha_b(V)$ onto $\sl_{n+1}$ we obtain an $\sl_{n+1}$-module which we call \emph{Witt module} and denote again as $F^\alpha_b(V)$.
 It is easy to see that all $F^\alpha_b(V)$ are weight (with respect to ${\mathfrak{h}}$) modules over
$\sl_{n+1}$.

 More precisely, we have
\begin{equation}\aligned &\bE_{ij}\cdot v(r)=((r_j+\a_j)v+(E_{ij}-E_{jj})v)(r+e_i-e_j);\\
&\bE_{i,n+1}\cdot v(r)=-(\sum_{j=1}^n (\a_j+r_j)v+\sum_{j=1}^n E_{ij}v)(r+e_i);\\
&\bE_{n+1,i}\cdot v(r)=((r_i+\a_i)v-E_{ii}v)(r-e_i);\\
&\bE_{n+1,n+1}\cdot v(r)=-\sum_{j=1}^n (\a_j+r_j)v(r),\endaligned\end{equation}
for all $v\in V$ and $1\le i,j\le n$.

\section{ From $\gl_2$-modules to $\sl_3$-modules}

In this section we consider Witt modules over $\sl_3$ obtained from infinite dimensional weight $\gl_2$-modules.
Let $\alpha_1,\alpha_2,\lambda, b, c \in\C$ with $c\pm \lambda\notin\Z$.
Let $V=\span\{v_i|i\in\Z\}$ be an irreducible cuspidal (i.e., with injective action of generators $E_{12}$, $E_{21}$) $\gl_2$-module with $1$-dimensional weight subspaces and
 with the following  module structure:
\begin{equation}\label{V}\aligned &E_{11}v_i=(b+i'')v_i,\\
&E_{22}v_i=(b-i'')v_i,\\
&E_{12}v_i=(c+i'')v_{i+1},\\
&E_{21}v_i=(c-i'')v_{i-1},\endaligned\end{equation}
where $i''=\lambda+i$.
It is well known that these modules exhaust all irreducible cuspidal weight  $\gl_2$-modules.
For convenience, we set
$$r_i'=r_i+\alpha_i,\forall r_i\in\Z, i=1,2.$$
We will consider the $\sl_3$-module $$F^{\alpha}_{2b}(V)=\span\{v_i(r_1, r_2)|i, r_1,r_2\in\Z\}.$$
Then we have
\begin{equation}\aligned &\bE_{11}v_i(r_1, r_2)\hskip -4pt =\hskip -4pt r'_1v_i(r_1, r_2),\\
&\bE_{22}v_i(r_1, r_2)\hskip -4pt = \hskip -4pt r'_2v_i(r_1, r_2),\\
&\bE_{33}v_i(r_1, r_2)\hskip -4pt =\hskip -4pt 
-( r_1' +r_2')v_i(r_1, r_2),\\
&\bE_{12}v_i(r_1, r_2)
\hskip -4pt =\hskip -4pt \Big((i''-b+r_2' )v_{i} +(c +i'')v_{i+1}\Big)(r_1+1, r_2-1),\\
&\bE_{21}v_i(r_1, r_2)
\hskip -4pt =\hskip -4pt \Big((c -i'')v_{i-1}+(-i''-b +r_1' )v_{i}\Big)(r_1-1, r_2+1),\\
&\bE_{13}v_i(r_1, r_2)
\hskip -4pt =\hskip -4pt -\Big((r'_1 +r_2'+b +i'')v_i+(c +i'')v_{i+1}\Big)(r_1+1, r_2),\\
&\bE_{31}v_i(r_1, r_2)
\hskip -4pt =\hskip -4pt (r_1'-b-i'')v_i(r_1-1, r_2),\\
&\bE_{23}v_i(r_1, r_2)
\hskip -4pt =\hskip -4pt -\Big((c-i'')v_{i-1}+(r_1'+r_2'+b-i'')v_i\Big)(r_1, r_2+1),\\
&\bE_{32}v_i(r_1, r_2)
=(r_2'-b+i'')v_i(r_1, r_2-1)
.\endaligned\end{equation}

\begin{lemma}\label{2.2} Let $\lambda, b, c,  \alpha_1,\alpha_2\in\C$ such that $c\pm \lambda, \alpha_1-b- \lambda,
\alpha_2-b+\lambda,\alpha_1+2b,  \alpha_2+2b,
\alpha_1 +\alpha_2+b\pm c\notin \Z$, and $V$ be the $\gl_2$-module defined by \eqref{V}.
 Then the $\sl_3$-module $F^\alpha_{2b}(V)$ can be generated by any single vector $v_i(r_1,r_2)$ where $i, r_1,r_2\in\Z$.
\end{lemma}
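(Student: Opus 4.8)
The plan is to show that starting from a single vector $v_i(r_1,r_2)$, by applying the generators $\bE_{kl}$ repeatedly we can reach every basis vector $v_j(s_1,s_2)$. The proof naturally splits into two tasks: first, fixing the torus degree, show that inside a single ``layer'' $\{v_j(r_1,r_2): j\in\Z\}$ (for fixed $r_1,r_2$) the submodule generated so far already contains all the $v_j$'s; second, show that we can move between layers, i.e.\ change $(r_1,r_2)$ arbitrarily. Since the root vectors $\bE_{12},\bE_{21},\bE_{13},\bE_{31},\bE_{23},\bE_{32}$ shift $(r_1,r_2)$ by the root vectors $e_1-e_2$, $-(e_1-e_2)$, $e_1$, $-e_1$, $e_2$, $-e_2$ respectively, any two layers are connected by a chain of such moves, so once we establish both tasks we are done. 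The role of the genericity hypotheses on $\a_1,\a_2,b,c,\l$ is precisely to guarantee that none of the scalar coefficients appearing along the way vanishes, so that no step is ``blocked''.

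First I would handle the ``vertical'' direction (changing $i$) combined with a controlled change of degree. Look at the pair of operators $\bE_{31}$ and $\bE_{13}$. We have $\bE_{31}v_i(r_1,r_2)=(r_1'-b-i'')v_i(r_1-1,r_2)$, which is a nonzero multiple of $v_i(r_1-1,r_2)$ precisely because $\alpha_1-b-\lambda\notin\Z$ forces $r_1'-b-i''=(r_1-i)+(\alpha_1-b-\lambda)\ne 0$ for all $i,r_1\in\Z$. Then applying $\bE_{13}$ to $v_i(r_1-1,r_2)$ gives a combination of $v_i(r_1,r_2)$ and $v_{i+1}(r_1,r_2)$; the coefficient of $v_{i+1}$ is $-(c+i'')=-(c+\lambda+i)$, which is nonzero since $c+\lambda\notin\Z$. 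Hence from $v_i(r_1,r_2)$ we obtain $v_{i+1}(r_1,r_2)$ modulo $v_i(r_1,r_2)$, i.e.\ $v_{i+1}(r_1,r_2)$ lies in the generated submodule. Symmetrically, using $\bE_{32}$ then $\bE_{23}$ (with the coefficient $r_2'-b+i''$ nonzero because $\alpha_2-b+\lambda\notin\Z$, and the coefficient $-(c-i'')=-(c-\lambda-i)$ nonzero because $c-\lambda\notin\Z$) we get $v_{i-1}(r_1,r_2)$. By induction on $|j-i|$, the submodule generated by $v_i(r_1,r_2)$ contains the entire layer $\{v_j(r_1,r_2):j\in\Z\}$.

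With a whole layer in hand, the ``horizontal'' moves become easy: for instance $\bE_{31}$ applied to $v_i(r_1,r_2)$ gives a nonzero multiple of $v_i(r_1-1,r_2)$ (coefficient $r_1'-b-i''\ne 0$ as above), and $\bE_{32}$ gives a nonzero multiple of $v_i(r_1,r_2-1)$ (coefficient $r_2'-b+i''\ne 0$). To increase $r_1$ or $r_2$ we use $\bE_{13}$ or $\bE_{23}$: $\bE_{13}v_i(r_1,r_2)$ is a combination of $v_i(r_1+1,r_2)$ and $v_{i+1}(r_1+1,r_2)$, but since by the previous step the submodule already contains $v_{i+1}(r_1+1,r_2)$ once it contains any vector in that layer --- wait, we need to be slightly careful about the order; the cleanest route is: apply $\bE_{13}$ to get \emph{some} nonzero element of the layer $(r_1+1,r_2)$ (the combination $-(r_1'+r_2'+b+i'')v_i-(c+i'')v_{i+1}$ is nonzero since at least one coefficient, say $c+i''$, is nonzero), then invoke the vertical step to fill out that whole layer and in particular recover $v_i(r_1+1,r_2)$. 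The coefficient $r_1'+r_2'+b+i''$ could vanish for some $(i,r_1,r_2)$, but that is harmless --- we only need the combination to be nonzero, which it is. Iterating, we reach every layer $(s_1,s_2)\in\Z^2$, and hence every basis vector. The main obstacle, and the only place requiring real care, is bookkeeping the coefficients: one must verify at each of the (finitely many types of) steps that the relevant scalar, as a function of the integer parameters $i,r_1,r_2$, never hits zero, and the listed genericity conditions $c\pm\lambda,\ \alpha_1-b-\lambda,\ \alpha_2-b+\lambda,\ \alpha_1+2b,\ \alpha_2+2b,\ \alpha_1+\alpha_2+b\pm c\notin\Z$ are exactly what is needed (the conditions $\alpha_i+2b\notin\Z$ and $\alpha_1+\alpha_2+b\pm c\notin\Z$ ensure the Cartan weights are distinct and the remaining root-vector coefficients are controlled); no single step is deep, but the argument must be organized so that every transition used is genuinely invertible or at least nonzero.
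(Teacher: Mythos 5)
Your ``vertical'' step is fine (indeed slightly slicker than the paper's for that part): from the single basis vector $v_i(r_1,r_2)$, applying $\bE_{31}$ and then $\bE_{13}$ (resp.\ $\bE_{32}$ then $\bE_{23}$) and subtracting the known vector $v_i(r_1,r_2)$ does yield $v_{i\pm1}(r_1,r_2)$, and the downward moves via $\bE_{31},\bE_{32}$ are also fine. The genuine gap is the upward move. You propose: apply $\bE_{13}$ to get ``some nonzero element'' of the layer $(r_1+1,r_2)$ and then ``invoke the vertical step to fill out that whole layer.'' But the vertical step as you proved it starts from a \emph{basis} vector: it works only because you can subtract off $v_i(r_1,r_2)$, which you already know lies in $W$. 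Starting from the two-term combination $-\bigl((r_1'+r_2'+b+i'')v_i+(c+i'')v_{i+1}\bigr)(r_1+1,r_2)$ you cannot subtract anything, so the step is not justified. Worse, this cannot be patched by just taking more such elements: if you apply $\bE_{13}$ to the entire layer $(r_1,r_2)$, the resulting span $\{(r_1'+r_2'+b+j'')v_j+(c+j'')v_{j+1}\,(r_1+1,r_2):j\in\Z\}$ generically contains \emph{no} basis vector $v_k(r_1+1,r_2)$ at all (a finite-support recurrence argument on the coefficients shows any relation forces all coefficients to vanish), and with your ordering of moves the only one-step source of vectors in the layer $(r_1+1,r_2)$ is $\bE_{13}$ applied to $(r_1,r_2)$. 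So increasing the degree requires a new idea, not bookkeeping.

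That missing idea is exactly what the paper supplies: to raise the degree one pairs \emph{two} operators landing in the same target layer and solves a $2\times2$ linear system. Concretely, the paper first fills the anti-diagonal using the pairs $\bigl(\bE_{12},\ \bE_{13}\bE_{32}\bigr)$ and $\bigl(\bE_{21},\ \bE_{23}\bE_{31}\bigr)$, whose $2\times2$ determinants are nonzero precisely because $\alpha_1+2b,\ \alpha_2-b+\lambda\notin\Z$ (resp.\ $\alpha_2+2b,\ \alpha_1-b-\lambda\notin\Z$); after descending with $\bE_{31}$ it raises $j_1+j_2$ by pairing $\bE_{13}$ applied at $(j_1,j_2)$ with $\bE_{23}$ applied at $(j_1+1,j_2-1)$ (both layers already known), and the determinant of that system is $(j_1'+j_2'+b-c)(j_1'+j_2'+b+c-1)$, nonzero exactly because $\alpha_1+\alpha_2+b\pm c\notin\Z$. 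Note that your argument never actually uses the hypotheses $\alpha_1+2b,\ \alpha_2+2b,\ \alpha_1+\alpha_2+b\pm c\notin\Z$ (your parenthetical that they ensure ``Cartan weights are distinct'' is not what they do), which is a reliable sign that the route you sketched does not close up: those conditions are precisely the nonvanishing of the determinants in the degree-raising steps your proposal is missing.
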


\begin{proof}Let $W$ be the $\sl_3$-submodule of $F^\alpha_{2b}(V)$ generated by  $v_i(r_1,r_2)$ where  $i, r_1,r_2\in\Z$ are fixed.

\

{\bf Claim.} We have $v_i(j, r_1+r_2-j)\in W$ for all $j\in\Z$.

Indeed, from
$$\aligned &\bE_{12}v_i(r_1, r_2)=\Big((i''-b+r_2' )v_{i} +(c+i'')v_{i+1}\Big)(r_1+1, r_2-1),\\
&\bE_{13}\bE_{32}v_i(r_1, r_2)=(r_2'- b+i'')\bE_{13}(v_i(r_1, r_2-1)),\\
&\hskip 1cm  =-(r_2'-b  +i'')\Big((r_1' +r_2'-1+b+i'')v_i \\
&\hskip 1.5cm+(c+i'')v_{i+1}\Big)(r_1+1, r_2-1),
\endaligned
$$
and noting that $ \alpha_1+2b, \alpha_2-b+\lambda\notin\Z$ we see that
\begin{equation}\label{vi1}v_i(r_1+1, r_2-1), v_{i+1}(r_1+1,r_2-1)\in W.\end{equation}
 Now, from
$$\aligned
\bE_{21}v_i(r_1, r_2) =& \Big((c -i'')v_{i-1}\\
&+(-i''-b +r_1')v_{i}\Big)(r_1-1, r_2+1),\\
\bE_{23}\bE_{31}v_i(r_1, r_2)=&(r_1' -b -i'')\bE_{23}(v_i(r_1-1, r_2))\\
 \ \ \ \ =& -(r_1' -b -i'')\Big((c-i'')v_{i-1}\\
 \hskip 2cm & +( r_1' +r'_2+b -i''-1)v_i\Big)(r_1-1, r_2+1)\endaligned
$$ and using the fact  that $ \alpha_2+2b, \alpha_1-b- \lambda\notin\Z$  we obtain
\begin{equation}\label{vi2}v_i(r_1-1, r_2+1), v_{i-1}(r_1-1,r_2+1)\in W.\end{equation}
In this manner we deduce the claim.

For integer   $s, r\in\Z$ set $V(s, r)=\span\{v(s, r)|v\in V\}.$

Repeatedly using Claim,  \eqref{vi1} and \eqref{vi2}, we deduce that
$$V(j, r_1+r_2-j)\subset W, \forall j\in\Z.$$
Applying $\bE_{31}$ we obtain that
$$V(j_1,j_2)\subset W, \forall j_1,j_2\in\Z, j_1+j_2\le r_1+r_2.$$

Since 
$$\aligned
\bE_{13}v_i(j_1, j_2)=&-\Big(( j'_1 +j'_2+b+i'')v_i\\
\hskip 2cm& +(c+i'')v_{i+1}\Big)(j_1+1, j_2),\\
\bE_{23}v_{i+1}(j_1+1, j_2-1) =& -\Big((c-i''-1)v_{i}\\
\hskip 2cm & +( j_1' + j'_2+b-i''-1)v_{i+1}\Big)(j_1+1, j_2),\endaligned$$
 and $\alpha_1 +\alpha_2+b\pm c\notin \Z$, we see that
\begin{equation}v_i(j_1+1, j_2), v_{i+1}(j_1+1,j_2)\in W.\end{equation}
Repeatedly using the claim  and the above equation we deduce that $W=F^\alpha_{2b}(V)$, and hence the lemma follows.
\end{proof}

\begin{theorem}\label{2.3} Let $\lambda, b, c, \alpha_1,\alpha_2\in\C$ such that $ c\pm \lambda, \alpha_1-b- \lambda,
\alpha_2-b+\lambda,\alpha_1+2b,  \alpha_2+2b,
\alpha_1 +\alpha_2+b\pm c$,  and $c\pm 3b$ are not integers. If $V$ is a  $\gl_2$-module define by \eqref{V}
 then the $\sl_3$-module $F^\alpha_{2b}(V)$ is irreducible.
\end{theorem}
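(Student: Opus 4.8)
The plan is to show that any nonzero submodule $M\subseteq F^\alpha_{2b}(V)$ must contain some weight vector of the form $v_i(r_1,r_2)$, and then invoke Lemma~\ref{2.2} to conclude $M=F^\alpha_{2b}(V)$. Since $F^\alpha_{2b}(V)$ is a weight module with respect to $\mathfrak h$, the submodule $M$ decomposes into weight spaces, so it suffices to analyze a single weight space. The weight of $v_i(r_1,r_2)$ depends only on $(r_1,r_2)$ (the action of $\mathfrak h$ in \eqref{V} is via $r_1'$, $r_2'$), so a weight space of $F^\alpha_{2b}(V)$ is exactly $V(r_1,r_2)=\span\{v_i(r_1,r_2):i\in\Z\}$, which is infinite dimensional. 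Thus I must show: if a nonzero $M$ meets some $V(r_1,r_2)$, then in fact $M$ contains an individual basis vector $v_i(r_1,r_2)$.

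The key idea is to use the operators that preserve a given weight space, namely elements of $U(\sl_3)$ of weight zero, to "diagonalize" on $V(r_1,r_2)$. The natural candidates are the two compositions that return to the same $(r_1,r_2)$: products like $\bE_{12}\bE_{21}$, $\bE_{21}\bE_{12}$, $\bE_{13}\bE_{31}$, $\bE_{31}\bE_{13}$, $\bE_{23}\bE_{32}$, $\bE_{32}\bE_{23}$. From the explicit formulas in the display after \eqref{V}, each such composition acts on $V(r_1,r_2)$ as a two-diagonal operator: a product like $\bE_{31}\bE_{13}$ fixes $v_i$ up to scalar and also has an off-diagonal term sending $v_i\mapsto v_{i+1}$ (coming from the $(c+i'')v_{i+1}$ piece in $\bE_{13}$), while $\bE_{13}\bE_{31}$ produces the adjacent shift in the other direction. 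Taking suitable linear combinations of these weight-zero operators, and using $\bE_{31},\bE_{13}$ (shift in $r_1$ only) together with $\bE_{32},\bE_{23}$ (shift in $r_2$ only) composed back, one obtains an operator acting on $V(r_1,r_2)$ as a genuine three-term (tridiagonal) recurrence $v_i\mapsto a_iv_{i-1}+d_iv_i+e_iv_{i+1}$ with generically distinct "eigenvalue-like" coefficients. The genericity hypotheses — in particular $c\pm3b\notin\Z$, which is the new condition not already in Lemma~\ref{2.2} — are precisely what guarantee that none of these coefficients vanish identically and that the relevant finite submatrices are invertible, so that starting from any nonzero vector $\sum_{i\in S}\mu_iv_i(r_1,r_2)\in M$ with $S$ finite, one can strictly shrink $|S|$ by applying such an operator and subtracting, until $|S|=1$.

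Concretely, the steps I would carry out are: (1) observe $M$ is a weight module and pick a nonzero $w=\sum_{i\in S}\mu_iv_i(r_1,r_2)\in M$ with $|S|$ minimal over all weight vectors in $M$ in all weight spaces; (2) compute the action on $V(r_1,r_2)$ of a well-chosen weight-zero operator $T\in U(\sl_3)$ — e.g.\ built from $\bE_{13}\bE_{31}$, $\bE_{31}\bE_{13}$, $\bE_{23}\bE_{32}$, $\bE_{32}\bE_{23}$ — whose "diagonal" coefficient $f(i)$ is a nonconstant function of $i$ (this is where $c\pm 3b\notin\Z$ and the earlier conditions ensure the leading scalar coefficients never degenerate); (3) since $f$ is nonconstant there are $i\ne j$ in $S$ with $f(i)\ne f(j)$, so $Tw-f(j)w\in M$ is a shorter weight vector unless the off-diagonal contributions spoil it — to handle the off-diagonal shifts one processes $S$ from its maximal (or minimal) index inward, or first applies a shift operator to move $S$ into a single weight space where only the diagonal part survives on the extreme index; (4) minimality forces $|S|=1$, i.e.\ $v_i(r_1,r_2)\in M$; (5) apply Lemma~\ref{2.2} to get $M=F^\alpha_{2b}(V)$.

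The main obstacle is step (2)–(3): the weight-zero operators available are not diagonal on $V(r_1,r_2)$ but tridiagonal, so the naive eigenvalue-subtraction argument does not immediately reduce the support. The real work is to find the right polynomial combination of these operators — or the right two-step composition through a neighboring weight space — whose action is either diagonalizable with distinct eigenvalues or at least strictly upper/lower triangular with nonvanishing off-diagonal entries on any finite-support vector, so that a reduction of support is always possible. Verifying that the genericity conditions (especially the extra $c\pm3b\notin\Z$) make all the needed leading coefficients $c+i''$, $i''\pm b$-type expressions, and their relevant sums and products nonzero for the indices involved is the technical heart; once the tridiagonal operator is in hand, the support-minimization is routine linear algebra.
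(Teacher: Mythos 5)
Your overall reduction --- show that any nonzero submodule contains a single basis vector $v_i(r_1,r_2)$ and then quote Lemma \ref{2.2} --- is the right skeleton, but the step you yourself flag as ``the real work'' is precisely the content of the theorem, and your proposal does not supply it. The weight-zero elements you propose to use ($\bE_{12}\bE_{21}$, $\bE_{13}\bE_{31}$, $\bE_{23}\bE_{32}$, and their companions) all act on a weight space $V(r_1,r_2)$ tridiagonally in $i$, while the only generators acting diagonally in $i$ with $i$-dependent coefficients are $\bE_{31}$ and $\bE_{32}$, which strictly lower the weight and commute with each other, so no weight-zero word in them returns with a nonconstant diagonal. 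There is thus no evident operator that is diagonal (or strictly triangular) with distinct eigenvalues on $V(r_1,r_2)$, and the ``routine linear algebra'' support-shrinking of your steps (2)--(4) has nothing to run on. Moreover, your guess about where $c\pm 3b\notin\Z$ enters (nonvanishing of leading coefficients of some tridiagonal operator) is misplaced: in the actual argument these quantities never occur as leading coefficients; they appear only at the very end as an arithmetic consistency obstruction. This is a strong sign that a purely formal triangularity/nonvanishing argument of the kind you sketch cannot work: when $c+3b$ or $c-3b$ is a suitable integer the mechanism you want to rule out (weight vectors of fixed length $\geq 2$ generating a submodule) is exactly what the computation leaves open.

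For comparison, the paper closes this gap as follows. Assume $W$ is a nonzero proper submodule and pick a homogeneous element $w_i(r_1,r_2)=\sum_{j=0}^{s}a_{ij}(r_1,r_2)v_{i+j}(r_1,r_2)\in W$ of minimal length $s+1$; by Lemma \ref{2.2} necessarily $s\geq 1$. Re-running the argument of Lemma \ref{2.2} at the level of such elements shows that $W$ contains minimal-length elements $w_k(j_1,j_2)$ for all $k,j_1,j_2\in\Z$. Applying the diagonal operators $\bE_{31}$ and $\bE_{32}$ yields the recursions \eqref{ar1} and \eqref{ar2} for the coefficients as $(r_1,r_2)$ varies; then each of the two pairs $\bigl(\bE_{12},\,\bE_{13}\bE_{32}\bigr)$ and $\bigl(\bE_{21},\,\bE_{23}\bE_{31}\bigr)$, after cancelling the extreme term so as to land back on a minimal-length element, produces an explicit formula for the ratio $a_{i,j-1}(r_1,r_2)/a_{ij}(r_1,r_2)$. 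Equating the two formulas forces $c+3b-s-2=0$ or $c-3b+s+3=0$, contradicting $c\pm 3b\notin\Z$. So the contradiction is extracted from the incompatibility of two coefficient recursions attached to a minimal-length element, not from diagonalizing a weight-zero operator; to complete your proof you would either have to rework it along these lines or actually exhibit the triangularizing weight-zero operator, which the structure of the action makes very unlikely to exist.
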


\begin{proof} To the contrary, we assume that  the $\sl_3$-module  $F^\alpha_{2b}(V)$ is reducible.   Take a  nonzero proper submodule $W$ of $F^\alpha_{2b}(V)$.
For \begin{equation}\label{wi}w_{i}(r_1,r_2)=\sum_{j=0}^sa_{ij}(r_1,r_2)v_{i+j}(r_1,r_2)\in F^\alpha_{2b}(V)\end{equation}
where $a_{ij}(r_1,r_2)\in\C$ with $a_{i0}(r_1,r_2)=1, a_{is}(r_1,r_2)\ne0$, we say that the length of $w_{i}(r_1,r_2)$ is $s+1$.
We may assume that $s+1$ is the minimal length of nonzero homogeneous elements in $W$, in particular, $w_{i}(r_1,r_2) \in W$ for some fixed $i, r_1,r_2\in\Z$.
We know that $s\ge 1$.

We will first  prove that all vectors in (\ref{wi}) form a basis of $W$. Then analyze the coefficients $a_{ij}(r_1,r_2)$ to deduce some contradictions.

\

{\bf Claim:} The submodule  $W$ is spanned by elements of length $s+1$, i.e.,   $W$ is spanned by all $w_{j}(s_1,s_2)$ for $j, s_1,s_2\in\Z$, which are defined as in (\ref{wi}).

To prove this claim, we essentially follow the proof of Lemma \ref{2.2}.  Since $\alpha_1 +2b, \alpha_2-b+\lambda\notin\Z$, the following two elements in $W$:
$$\aligned \bE_{12}w_i(r_1, r_2)=&\sum_{j=0}^sa_{ij}(r_1,r_2)(i''+j-b+r_2')v_{i+j}(r_1+1, r_2-1) \\
& \ \ \ +\sum_{j=0}^sa_{ij}(r_1,r_2)(c+i''+j)v_{i+j+1}(r_1+1, r_2-1),\\
 \bE_{13}\bE_{32}w_i(r_1, r_2)&=\sum_{j=0}^sa_{ij}(r_1,r_2)(r_2'-b+i''+j)\bE_{13}(v_{i+j}(r_1, r_2-1))\\
=-\sum_{j=0}^sa_{ij}&(r_1,r_2)(r_2'-b+i''+j ) \Big(( r'_1 +r'_2-1+b+i''+j )v_{i+j}\\
& \ \ \ \ \ +(c+i''+j)v_{i+j+1}\Big)(r_1+1, r_2-1),
\endaligned
$$
are linearly independent which can be seen by looking at the coefficients of $v_i(r_1, r_2)$ and $v_{i+s+1}(r_1, r_2)$. By taking   linear combinations of the above two elements we obtain  two homogeneous elements of the form (\ref{wi}):
\begin{equation}\label{vi1'}w_i(r_1+1, r_2-1), w_{i+1}(r_1+1,r_2-1)\in W.\end{equation}

Also, since $\alpha_2 +2b, \alpha_1-b- \lambda\notin\Z$, the following two elements in $W$:
$$\aligned
\bE_{21}w_i(r_1, r_2) =&\sum_{j=0}^sa_{ij}(r_1,r_2)\Big((c-i''-j)v_{i+j-1}\\
& \  \  \  \ +(-i''-j-b+r_1')v_{i+j}\Big)(r_1-1, r_2+1),\\
\bE_{23}\bE_{31}w_i(r_1, r_2)
& =-\sum_{j=0}^sa_{ij}(r_1,r_2)(r_1'-b-i''-j)\Big((c-i''-j)v_{i+j-1}\\
& \ \ \ +(r_1' +r'_2+b-i''-j-1)v_{i+j}\Big)(r_1-1, r_2+1)\endaligned
$$
are linearly independent which can be seen by looking at the coefficients of $v_{i-1}(r_1, r_2)$ and $v_{i+s}(r_1, r_2)$. By taking   linear combinations of the above two elements we obtain  two homogeneous elements of the form (\ref{wi}): \begin{equation}\label{vi2'}w_i(r_1-1, r_2+1), w_{i-1}(r_1-1,r_2+1)\in W.\end{equation}
In this manner we obtain vectors $$w_{k}(j, r_1+r_2-j)\in W, \forall\ k, j\in\Z.$$
Applying $\bE_{31}$, we deduce that
$$w_k(j_1,j_2)\in W, \forall \ k, j_1,j_2\in\Z, j_1+j_2\le r_1+r_2.$$

From $$\aligned
&\bE_{13}w_k(j_1, j_2)=-\sum_{j=0}^sa_{kj}(j_1,j_2)\Big((j_1'+j'_2+b+k''+j)v_{k+j}\\
 &\hskip 1.5cm+(c+k'')v_{k+j+1}\Big)(j_1+1, j_2)\in W,\\
&\bE_{23}w_{k+1}(j_1+1, j_2-1) =-\sum_{j=0}^sa_{kj}(j_1,j_2)\Big((c-k''-1)v_{k+j}\\
&\hskip 1.5cm+(j'_1 +j'_2+b-k''-j-1)v_{k+j+1}\Big)(j_1+1, j_2)\in W,\endaligned$$
 and the fact that $\alpha_1 +\alpha_2+b\pm c\notin \Z$, we obtain
\begin{equation}w_k(j_1+1, j_2), w_{k+1}(j_1+1,j_2)\in W,  \forall \ k, j_1,j_2\in\Z, j_1+j_2\le r_1+r_2.\end{equation}
Repeating this precess, we deduce the Claim.

\

Now we have $$\bE_{31}w_{i}(r_1,r_2)=\sum_{j=0}^s(r_1'-b -i''-j)a_{ij}(r_1,r_2)v_{i+j}(r_1-1,r_2).$$ Hence,
\begin{equation}\label{ar1} a_{ij}(r_1-1,r_2)=\frac{r_1'-b- i''-j}{r_1'-b-i''} a_{ij}(r_1,r_2) , \end{equation}
for  $i, r_1,r_2\in\Z$ and $ j=1,2,\cdots, s$.
Since $$\bE_{32}w_{i}(r_1,r_2)=\sum_{j=0}^s(r_2'-b+i''+j)a_{ij}(r_1,r_2)v_{i+j}(r_1,r_2-1),$$
we also see that
\begin{equation}\label{ar2} a_{ij}(r_1,r_2-1)=\frac{r_2'-b+ i''+j}{r_2'-b+ i''} a_{ij}(r_1,r_2) , \end{equation}
for  $i, r_1,r_2\in\Z$ and $i, r_1,r_2\in\Z$ and $ j=1,2,\cdots, s$.

Cancelling the term $v_{i+s+1}(r_1+1,r_2-1)$  in the formulas of $\bE_{12}w_{i}(r_1,r_2)$ and $\bE_{13}\bE_{32}w_{i}(r_1,r_2)$, we have
\begin{equation}\label{ar12} \aligned&\Big((r_2'-b+i''+s)\bE_{12}+\bE_{13}\bE_{32}\Big)w_{i}(r_1,r_2)\\
&= (r_2'-b+i'')(s+1-2b-r_1')w_i(r_1+1,r_2-1), \endaligned\end{equation}
i.e.,
$$\aligned
&(r_2'-b+i''+s)\Big((r_2'-b +i''+j)a_{i,j}(r_1,r_2)\\
& \  \  \  \  \  +(c +i''+j-1)a_{i,j-1}(r_1,r_2)\Big)\\
&-(r_2'-b+ i''+j)(r_1'+r_2'-1+b +i''+j)a_{i,j}(r_1,r_2)\\
&-(r_2'-b+i''+j-1)(c+i''+j-1)a_{i,j-1}(r_1,r_2)\\
&= (r_2'-b +i'')(s+1-2b-r_1')a_{ij}(r_1+1,r_2-1),\\
&=(s+1-r_1'-2b)\frac{(r_1'-b-i''+1)(r_2'-b +i''+j)}{(r_1'-b-i''-j+1)}a_{ij}(r_1,r_2),
\endaligned$$ where in the last step we have used \eqref{ar1} and \eqref{ar2}.
So we have
$$\aligned
&(r_2'-b+i''+j) (s+1-r_1'-2b-j)a_{i,j}(r_1,r_2)\\
&+(c+i''+j-1)(s+1-j)a_{i,j-1}(r_1,r_2)\\
&=\frac{
(r_2' -b+i''+j) (s+1-r_1'-2b)(r_1'-b -i''+1) }{(r_1'-b -i''-j+1)}a_{ij}(r_1,r_2),
\endaligned$$ i.e.,
$$\aligned
&(s+1-j)(c +i''+j-1)a_{i,j-1}(r_1,r_2)\\
&=\frac{(r_2'-b +i''+j) j(s+2-3b -i''-j) }{(r_1'-b- i''-j+1)}a_{ij}(r_1,r_2).
\endaligned$$ or
\begin{equation} \label{jj-1}a_{i,j-1}(r_1,r_2)
=\frac{
(r_2'-b+i''+j) j(s+2-3b-i''-j) }{(r_1'-b -i''-j+1)(s+1-j)(c +i''+j-1)}a_{ij}(r_1,r_2),
\end{equation}
for $i, r_1,r_2\in\Z$ and $ j=1,2,\cdots, s$.
From this formula we see that $\lambda+3b\notin\Z$.

Cancelling the term $v_{i-1}(r_1-1,r_2+1)$ in  the formulas of  $\bE_{21}w_{i}(r_1,r_2)$ and $\bE_{23}\bE_{31}w_{i}(r_1,r_2)$, we see that
\begin{equation}\label{ar12'} \aligned&\Big((r_1'-b -i'')\bE_{21}+\bE_{23}\bE_{31}\Big)w_{i}(r_1,r_2)\\
&= \frac{(r_1'-b -i''-s)(s+1-2b-r_2')a_{is}(r_1,r_2)}{a_{is}(r_1-1,r_2+1)}w_i(r_1-1,r_2+1), \endaligned\end{equation}
i.e.,
$$\aligned
&(r_1'-b -i'')\Big((-i''-j-b +r_1')a_{i,j}(r_1,r_2)\\
& \  \  \  \  \  +(c -i''-j-1)a_{i,j+1}(r_1,r_2)\Big)\\
&-(r_1'-b -i''-j)( r'_1-1 +r'_2+b -i''-j)a_{i,j}(r_1,r_2)\\
&-(r_1'-b -i''-j-1)(c -i''-j-1) a_{i,j+1}(r_1,r_2)\\
&= \frac{(r_1'-b -i-s)(s+1-2''b-r_2')a_{is}(r_1,r_2)}{a_{is}(r_1-1,r_2+1)}a_{ij}(r_1-1,r_2+1),\\
&=\frac{(s+1-r_2'-2b )(r_2'-b +i''+s+1)(r_1'-b -i''-j)}{(r_2'-b +i''+j+1)}a_{ij}(r_1,r_2),
\endaligned$$ where in the last step we have used \eqref{ar1} and \eqref{ar2}.
Hence,
$$\aligned
&(r_1'-b -i''-j)(j+1-r_2'-2b )a_{i,j}(r_1,r_2)\\
&+(c -i''-j-1)(j+1)a_{i,j+1}(r_1,r_2)\\
&=\frac{(s+1-r'_2-2b )(r_2'-b+i''+s+1)(r_1'-b -i''-j)}{(r_2'-b +i''+j+1)}a_{ij}(r_1,r_2).
\endaligned$$ i.e.,
$$\aligned
&(c- i''-j-1)(j+1)a_{i,j+1}(r_1,r_2)\\
&=\frac{
(r_1'+-b-i''-j)(s- j)(s+2-3b+  i''+j) }{r_2'-b+ i''+j+1}a_{ij}(r_1,r_2).
\endaligned$$ or
$$\aligned
&(c -i''-j)ja_{i,j}(r_1,r_2)\\
&=\frac{
(r_1'-b -i''-j+1)(s+1- j)(s+1-3b +i''+j) }{r_2'-b +i''+j}a_{i,j-1}(r_1,r_2),
\endaligned$$
for $i, r_1,r_2\in\Z$ and $ j=1,2,\cdots, s$. From this formula we see that $\lambda-3b\notin\Z$.
So we have
$$\aligned
&a_{i,j-1}(r_1,r_2)=\\
&\frac{(c- i''-j)j(r_2'-b +i''+j)}{
(r_1'-b-i''-j+1)(s+1- j)(s+1-3b+  i''+j) }a_{i,j}(r_1,r_2),
\endaligned$$
for $i, r_1,r_2\in\Z$ and $ j=1,2,\cdots, s$. Compare this with \eqref{jj-1} we see that
$$\aligned&\frac{(c-i''-j)j(r_2'-b+ i''+j)}{
(r_1'-b-i''-j+1)(s+1- j)(s+1-3b+i''+j) }\\
=&\frac{
(r_2'-b+i''+j) j(s+2-3b- i''-j) }{(r_1'-b-i''-j+1)(s+1-j)(c+i''+j-1)},\endaligned
$$ i.e.,
$$\frac{c-i''-j }{
 s+1-3b+ i''+j}\\
=\frac{
 s+2-3b- i''-j }{ c+i''+j-1}, \forall i\in\Z.
$$
It implies
$$c+3b-s-2=0 {\text{  or  }} c-3b+s+3=0.
$$
This is impossible since we have assumed that $c\pm 3b\notin\Z$. Thus the $\sl_3$-module $F^\alpha_{2b}(V)$ is irreducible.\end{proof}

Theorem \ref{2.3}  provides a large family of weight modules over $\sl_3$ with infinite dimensional weight spaces with explicit basis and the action of
 generators of the Lie algebra. We will call a Witt module $F^\alpha_{2b}(V)$ \emph{generic} if its parameters satisfy conditions of  Theorem \ref{2.3}. For example, it is the case if the five parameters $\lambda, b, c, \alpha_1,\alpha_2\in\C$  and $1$ are  linearly independent over $\Q$.

\begin{remark}
  Assume $\a_1-b-\lambda=k_1\in\Z$  and $\a_2-b+\lambda=k_2 \in \Z$.   Then
$X=\{v\in F_{2b}^{\alpha}(V)\ | \  E_{31}v=E_{32}v=0\}\ne 0$ and it generates  a proper $\sl_3$-submodule of $F_{2b}^{\alpha}(V)$. This submodule is
 a  generalized Verma module over $\sl_3$.  This shows that generalized Verma modules can be recovered from Witt modules.
\end{remark}

\section{Witt modules versus Gelfand-Tsetlin modules}

In this section we show that irreducible generic Witt modules are not Gelfand-Tsetlin modules.
 Let us first recall Gelfand-Tsetlin modules for $\gl_n$ which were introduced in \cite{DFO}.
Let  $U=U(\gl_n)$.  For each $m\leqslant n$ let $\mathfrak{gl}_{m}$ be the Lie subalgebra
of $\gl_n$ spanned by $\{ E_{ij}\,|\, i,j=1,\ldots,m \}$. Then we have the following chain
$$\gl_1\subset \gl_2\subset \ldots \subset \gl_n,$$
which induces  the chain $U_1\subset$ $U_2\subset$ $\ldots$ $\subset
U_n$ of the universal enveloping algebras  $U_{m}=U(\gl_{m})$, $1\leq m\leq n$. Let
$Z_{m}$ be the center of $U_{m}$. Then $Z_m$ is the polynomial
algebra in the $m$ variables $\{ c_{mk}\,|\,k=1,\ldots,m \}$,
\begin{equation}\label{equ_3}
c_{mk } \ = \ \displaystyle {\sum_{(i_1,\ldots,i_k)\in \{
1,\ldots,m \}^k}} E_{i_1 i_2}E_{i_2 i_3}\ldots E_{i_k i_1}.
\end{equation}

 The subalgebra $\Gamma$ of $U$ generated by $\{
Z_m\,|\,m=1,\ldots, n \}$ is the \emph{Gelfand-Tsetlin
subalgebra} of $U$ associated with the chain of subalgebras above. It is a polynomial algebra in the $\displaystyle \frac{n(n+1)}{2}$ variables $\{
c_{ij}\,|\, 1\leqslant j\leqslant i\leqslant n \}$.

A finitely generated $U$-module
$M$ is called a \emph{Gelfand-Tsetlin module (with respect to
$\Gamma$)} if as a $\Gamma$-module

\begin{equation*}
M=\bigoplus_{\mathfrak m\in  \max(\Gamma)}M(\mathfrak m),
\end{equation*}
where $$M(\mathfrak m)=\{v\in M| \mathfrak m^{k}v=0 \text{ for some }k\geq 0\},$$ and $ \max(\Gamma)$ is the set of maximal ideals of $\Gamma$.

Since the Gelfand-Tsetlin subalgebra contains the Cartan subalgebra $\h$ spanned by  $\{ E_{ii}\,|\, i=1,\ldots, n\}$ then any irreducible Gelfand-Tsetlin module is a weight module.
On the other hand, every weight module with finite dimensional weight subspaces is a Gelfand-Tsetlin module. Theory of Gelfand-Tsetlin modules was developed in \cite{Ov}, \cite{FO}.
In fact,
there are
several
 maximal commutative subalgebras of Gelfand-Tsetlin type for which we can define a corresponding category of Gelfand-Tsetlin modules. These subcategories correspond to different (finitely many) chains of different embedding as above. In the case of $\sl_3$ we will have $3$ possible Gelfand-Tsetlin subalgebras. Since different chains
  are
 conjugated by the Weyl group, respective categories of Gelfand-Tsetlin  modules are equivalent.

  Denote by $\mathcal GT$ the category of modules which are
  Gelfand-Tsetlin  modules with respect to some  Gelfand-Tsetlin subalgebra.

  \begin{proposition}
 If $F^\alpha_{2b}(V)$ is a generic Witt module, then the $\sl_3$-module $F^\alpha_{2b}(V)$ is not a Gelfand-Tsetlin  module.

 \end{proposition}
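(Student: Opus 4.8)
The plan is to argue by contradiction: suppose $F^\alpha_{2b}(V)$ were a Gelfand--Tsetlin module with respect to one of the three Gelfand--Tsetlin subalgebras $\Gamma$ of $U(\sl_3)$. Since all three chains of embeddings are conjugate under the Weyl group, and the Weyl group permutes the weight spaces of $F^\alpha_{2b}(V)$ (indeed the whole $\sl_3$-module structure is transported), it suffices to derive a contradiction for the standard chain $\gl_1\subset\gl_2\subset\sl_3$, i.e.\ with the generator $c_{21}=E_{11}+E_{22}$ together with the quadratic Casimir $c_{22}=E_{11}^2+E_{22}^2+2E_{12}E_{21}$ of the subalgebra $\gl_2=\span\{\bE_{11},\bE_{12},\bE_{21},\bE_{22}\}\subset\sl_3$. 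The linear element $c_{21}$ acts semisimply (it is in the Cartan), so the Gelfand--Tsetlin condition reduces to the statement that $c_{22}$ acts locally finitely on $F^\alpha_{2b}(V)$.

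The key computation is therefore to examine the action of $c_{22}$ on a single weight space, say on the subspace $V(r_1,r_2)=\span\{v_i(r_1,r_2)\mid i\in\Z\}$, which is preserved by $\gl_2\subset\sl_3$ (note $\bE_{12}$ and $\bE_{21}$ as elements of $\W_2$ shift $(r_1,r_2)$ by $(\pm1,\mp1)$, but the relevant $\gl_2$ here is the diagonal-block one; one should rather use the formulas (for the embedded operators) $\bE_{11}v_i(r)=r_1'v_i(r)$, $\bE_{22}v_i(r)=r_2'v_i(r)$, and $\bE_{12},\bE_{21}$ acting by the displayed two-term expressions). First I would write out $c_{22}$ explicitly as an operator on $V(r_1,r_2)$ — it is a tridiagonal (Jacobi-type) operator in the basis $\{v_i(r_1,r_2)\}$, whose entries are quadratic polynomials in $i$ with the parameters $\lambda,b,c,\alpha_1,\alpha_2$. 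Then I would show that this infinite tridiagonal operator is \emph{not} locally finite: no finite-dimensional subspace containing a given $v_i(r_1,r_2)$ is invariant, because the off-diagonal entries $(c\pm i'')$ and the coupling coefficients are nonzero for all $i$ (using $c\pm\lambda\notin\Z$ exactly as in the cuspidality of $V$, and the genericity hypotheses to rule out degenerations). More precisely, I would show the generalized eigenspaces of $c_{22}$ restricted to $V(r_1,r_2)$ are all finite-dimensional only if the diagonal entries repeat, and then exploit the irreducibility of $F^\alpha_{2b}(V)$ (Theorem \ref{2.3}): an irreducible Gelfand--Tsetlin module has the property that $\Gamma$ acts with a single generalized character on each... no — rather, each weight space would have to decompose into finite-dimensional generalized $\Gamma$-eigenspaces, and since the weight space $V(r_1,r_2)$ is itself a single weight space on which $c_{22}$ must then act locally finitely, I would derive that $c_{22}$ has infinitely many distinct eigenvalues with finite multiplicities, and trace through the recursions (analogous to \eqref{ar1}--\eqref{jj-1}) to see this forces a polynomial identity in $i$ that fails under genericity.

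The main obstacle will be the last step: ruling out local finiteness of the tridiagonal operator $c_{22}$ on $V(r_1,r_2)$ cleanly. A robust way is to observe that if $c_{22}$ acted locally finitely, then $V(r_1,r_2)$ (being a weight space, hence a $\gl_2$-module for the block $\gl_2$, actually a twist of $V$ itself) would be a locally finite $\Gamma_{\gl_2}$-module, i.e.\ a Gelfand--Tsetlin $\gl_2$-module; but $V$ is an irreducible cuspidal $\gl_2$-module with $c\pm\lambda\notin\Z$, on which the $\gl_2$-Casimir acts by a \emph{single scalar} (as $V$ is irreducible) — wait, that would make it locally finite. So the genuine obstruction, and the crux of the proof, is that the $\sl_3$-Gelfand--Tsetlin condition demands local finiteness of $c_{22}$ \emph{together with} the full $\Gamma$ on all of $F^\alpha_{2b}(V)$, and one must compare the two "directions": the Casimir $c_{22}$ acts by a scalar on each $V(r_1,r_2)$, but these scalars, read off from \eqref{V}, are \emph{independent of $(r_1,r_2)$} only up to the shift $\alpha$, whereas the other Gelfand--Tsetlin generator of the $\sl_3$-algebra — the cubic Casimir or the element $c_{31}$ — fails to act locally finitely because it mixes infinitely many weight spaces with genuinely unbounded Jordan blocks, exactly as the non-splitting recursions in the proof of Theorem \ref{2.3} show. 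I would therefore organize the final contradiction around showing that the element $E_{31}E_{13}$ (or an appropriate central element of $U(\sl_3)$) has an infinite Jordan block on $F^\alpha_{2b}(V)$ under the genericity assumptions, invoking \eqref{ar1}, \eqref{ar2} and the conclusion $c\pm3b\notin\Z$ to guarantee no periodicity.
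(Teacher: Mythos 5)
Your starting point---reducing the Gelfand--Tsetlin condition to the behaviour of the quadratic generators on the infinite-dimensional weight spaces $V(r_1,r_2)$---is the right one, and the tridiagonal observation you make in passing (nonvanishing of the off-diagonal coefficients for all $i$) is exactly the mechanism that settles the question. But as written the argument has genuine gaps. First, the reduction to the standard chain ``by Weyl-group conjugacy'' is not justified: if $F^\alpha_{2b}(V)$ were Gelfand--Tsetlin with respect to a conjugated subalgebra $w(\Gamma)$, what follows is that the \emph{twisted} module is Gelfand--Tsetlin with respect to $\Gamma$, and that twist is not (obviously) again a module of the same form; since membership in $\mathcal GT$ means ``Gelfand--Tsetlin for \emph{some} chain'', you must rule out all three chains, i.e.\ treat all three operators $\bE_{12}\bE_{21}$, $\bE_{23}\bE_{32}$, $\bE_{13}\bE_{31}$ separately. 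Second, the structural claim that $V(r_1,r_2)$ is preserved by the block $\gl_2=\span\{\bE_{11},\bE_{12},\bE_{21},\bE_{22}\}$ and is ``a twist of $V$ itself'' is false: by (3.2), $\bE_{12}$ and $\bE_{21}$ shift $(r_1,r_2)$ by $(\pm1,\mp1)$, and the embedded block action (through vector fields) has nothing to do with the $\gl_2$-action \eqref{V} defining $V$. This is precisely what produces your unresolved ``wait, that would make it locally finite'' worry: the block Casimir $c_{22}$ does \emph{not} act by a scalar on a weight space; modulo Cartan terms it acts there as $\bE_{12}\bE_{21}$, which sends $v_i(r_1,r_2)$ to a combination whose top term $v_{i+1}(r_1,r_2)$ carries the nonzero coefficient $(r_1'-b-i'')(c+i'')$ for every $i\in\Z$, so it has no eigenvectors in $V(r_1,r_2)$ at all.

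Third, the endgame you settle on cannot work. A central element of $U(\sl_3)$ acts by a scalar on the irreducible module $F^\alpha_{2b}(V)$ (Theorem \ref{2.3} together with Schur's lemma in Dixmier's form), so it cannot exhibit an infinite Jordan block; and the recursions \eqref{ar1}--\eqref{jj-1} describe coefficients of elements of a hypothetical proper submodule in the irreducibility proof, not the spectral behaviour of Gelfand--Tsetlin generators, so invoking them (or $c\pm3b\notin\Z$) here proves nothing. The correct finish is short and is the one the paper uses: if the module were Gelfand--Tsetlin for some chain, then for $0\neq v\in M(\mathfrak m)$ the subspace $\Gamma v$ is finite dimensional (because $\Gamma/\mathfrak m^k$ is), hence contains a common $\Gamma$-eigenvector; such a vector is a weight vector lying in a single $V(r_1,r_2)$ and must be an eigenvector of one of $\bE_{12}\bE_{21}$, $\bE_{23}\bE_{32}$, $\bE_{13}\bE_{31}$; but each of these operators strictly raises (respectively lowers) the extreme index of any finite linear combination $\sum_j a_jv_{i+j}(r_1,r_2)$, with coefficients $(r_1'-b-i'')(c+i'')$, $-(r_2'-b+i'')(c-i'')$, $-(r_1'-b-i'')(c+i'')$ that are nonzero for all $i$ under the genericity hypotheses, so no such eigenvector exists. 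You had this computation in hand for one chain; the proof needs it carried out for all three, without the Weyl-group shortcut and without the detour through central elements.
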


 \begin{proof}
 Suppose $F^\alpha_{2b}(V)$ is irreducible and $F^\alpha_{2b}(V)\in  \mathcal GT$. Then $F^\alpha_{2b}(V)$  contains a nonzero element $v$ which is  a common eigenvector of one of three Gelfand-Tsetlin subalgebras.
   This is possible if and only if $v$ is an eigenvector for one of the following operators: $\bE_{12}\bE_{21}$, $\bE_{23}\bE_{32}$, $\bE_{13}\bE_{31}$. From (3.2) we have
   $$\bE_{12}\bE_{21}v_i(r_1, r_2)$$
   $$=(c-i'')\bE_{12}v_{i-1}(r_1-1, r_2+1)+(-i''-b +r_1' )\bE_{12}v_{i}(r_1-1, r_2+1)$$
   $$=\ldots + (-i''-b +r_1' )(c+ i'')v_{i+1}(r_1, r_2),$$
   where $\ldots$ corresponds to the terms with smaller indices. Since $ (-i''-b +r_1' )(c+ i'')\neq 0$ 
   for all integer $i$, then $v_i(r_1, r_2)$ is not an eigenvector
   of $\bE_{12}\bE_{21}$. We immediately conclude that  $\bE_{12}\bE_{21}$ has no
   eigenvectors in $F^\alpha_{2b}(V)$.
   Similarly,
   $$\bE_{23}\bE_{32}v_i(r_1, r_2)=-(r_2'-b+ i'')(c- i'')v_{i-1}(r_1, r_2)+  \ldots,$$
    where $\ldots$ corresponds to the terms with larger indices. Then $\bE_{23}\bE_{32}$ has no
   eigenvectors in $F^\alpha_{2b}(V)$ since $(r_2'-b+i'')(c- i'')\neq 0$ for all integer $i$.

   Consider now
   $$\bE_{13}\bE_{31}v_i(r_1, r_2)=-(r_1'-b- i'')(c+ i'')v_{i+1}(r_1, r_2)+ \ldots,$$
   where $\ldots$ corresponds to the terms with smaller indices.  Hence, $\bE_{23}\bE_{32}$ has no
   eigenvectors in $F^\alpha_{2b}(V)$ since  $(r_1'-b- i'')(c+\lambda+i)\neq 0$ for all integer $i$. Thus any generic Witt module   $F^\alpha_{2b}(V)$ is not a Gelfand-Tsetlin  module.
 \end{proof}

 In \cite{FOS} torsion theories for $\gl_n$ were studied and a stratification of the category of all weight modules by the heights of prime ideals was obtained.
 In this stratification the category of Gelfand-Tsetlin modules is a starting point (when prime ideals are maximal).  We believe that  Witt modules will provide examples of irreducible modules in other strata as in the case of $\sl_3$.

\vspace{3mm}

 \noindent  V.F.: Institute of Mathematics and Statistics, University of Sao Paulo, Brazil, 05315-970.
 Email: futorny@ime.usp.br

 \vspace{0.2cm}  \noindent G.L.: Department of Mathematics, Henan University, Kaifeng 475004, China. Email:
liugenqiang@amss.ac.cn

\vspace{0.2cm} \noindent R,L.: Department of Mathematics, Soochow
University,  Suzhou,  P. R. China, Email: rencail@amss.ac.cn

\vspace{0.2cm} \noindent K.Z.: Department of Mathematics, Wilfrid
Laurier University, Waterloo, ON, Canada N2L 3C5,  and College of
Mathematics and Information Science, Hebei Normal (Teachers)
University, Shijiazhuang, Hebei, 050016 P. R. China. Email:
kzhao@wlu.ca

\end{document}